\theoremstyle{plain}
\newtheorem{thm}{Theorem}
\newtheorem{nota}[thm]{Notation}
\newtheorem{defin}[thm]{Definition}
\newcommand{\R}{\mathbb{R}}
\newcommand{\N}{\mathbb{N}}
\newcommand{\C}{\mathbb{C}}
\def\multiset#1#2{\ensuremath{\left(\kern-.2em\left(\genfrac{}{}{0pt}{}{#1}{#2}\right)\kern-.2em\right)}}
\begin{document}

\title{Buneman's theorem \\ for trees with exactly $n$ vertices}
\author{Agnese Baldisserri}

\date{}

\maketitle

\def\thefootnote{}

\footnotetext{ \hspace*{-0.36cm}

{\bf 2010 Mathematical Subject Classification: 05C05, 05C12, 05C22}

{\bf Key words: weighted trees, dissimilarity families} }

\begin{abstract}
Let  ${\cal T}=(T,w)$ be a positive-weighted tree with at least $n$ vertices.
For any  $i,j \in \{1,...,n\}$,
let $D_{i,j} ({\cal T})$ be
the weight of the unique path in $T$ connecting $i$ and $j$. The  $D_{i,j} ({\cal T})$ are called 
$2$-weights of  ${\cal T}$ and, if we put in order the $2$-weights, the vector which has the $D_{i,j} ({\cal T})$ as components is called \emph{$2$-dissimilarity vector} of $ {\cal T}$. Given a family 
of positive real numbers $\{D_{i,j}\}_{i,j \in \{1,...,n\}}$,
we say that a positive-weighted tree ${\cal T}=(T,w)$ realizes the family 
if $\{1,...,n\} \subset V(T)$ and $D_{i,j}({\cal T})=D_{i,j}$ for any $ i,j  \in \{1,...,n\}$.

A characterization of $2$-dissimilarity families of positive weighted trees is already known (see \cite{B}, \cite{SimP} or \cite{St}): the families must satisfy the well-known \emph{four-point condition}. However we can wonder when there exists a positive-weighted tree with \emph{exactly} $n$ vertices, $1,...,n,$ and realizing the family $\{D_{i,j}\}$. In this paper we will show that the four-point condition is necessary but no more sufficient, and so we will introduce two additional conditions (see Theorem \ref{thm:ThmAgne}).
\end{abstract}

\section{Introduction}
For any tree $T$, let $E(T)$, $V(T)$ and $L(T)$ 
be respectively the set of the edges,   
the set of the vertices and  the set of the leaves of $T$.
A {\bf weighted tree} ${\cal T}=(T,w)$ is a tree $T$ 
endowed with a function $w: E(T) \rightarrow \R$. 
For any edge $e$, the real number $w(e)$ is called the weight of the edge. If the weight of every edge is positive, we say that the tree is {\bf positive-weighted}; if the weight of every edge is nonnegative, we say that the tree is {\bf nonnegative-weighted}.
For any finite subtree $T'$ of  $T$, we define $w(T')$ to be the sum of the weights of the edges of $T'$. \\
In this paper we will deal only with finite positive-weighted trees.

\begin{defin} \label{defin:weight}
Let ${\cal T}=(T,w) $ be a positive-weighted tree. 
For any distinct $ i,j \in V(T)$,
 we define $ D_{\{i,j\}}({\cal T}) $ to be the weight of the unique path joining $i$ with $j$. 
We call such a subtree ``the subtree realizing  $ D_{\{i,j \}}({\cal T}) $''.
We define $ D_{\{i,i\}}({\cal T})=0 $ for any $i \in V(T)$. More simply, we denote 
$D_{\{i,j\}}({\cal T})$ by
$D_{i,j}({\cal T})$ for any order of $i,j$.  
We call  the  $ D_{i,j}({\cal T})$ 
the {\bf $2$-weights} of ${\cal T}$.
\end{defin}

If $S $ is a subset of $V(T)$, $|S|=n$, and  we order in some way the $2$-subsets of 
$ S$ (for instance, we order $S$ in some way 
and then we order the $2$-subsets of $ S$
in the lexicographic order with respect to the order of $S$),
the $2$-weights with this order give
a vector in $\mathbb{R}^{ n \choose 2}$. This vector is called 
$2${\bf -dissimilarity vector} of $({\cal T}, S)$.
Equivalently, if we don't fix any order, we can speak 
of the {\bf family of the $2$-weights}  of $({\cal T}, S)$.   

We can wonder when a family of positive real numbers is the family of the $2$-weights
of some weighted tree and of some subset of the set of its vertices. 
If $S$ is a finite set, we say that a family of positive real numbers
 $\{D_{i,j}\}_{i,j \in \{1,...,n\}}$  is {\bf p-treelike} (respectively nn-treelike) if there exist a
positive-weighted (respectively  nonnegative-weighted)  tree
 ${\cal T}=(T,w)$ and a subset $S$ 
of the set of its vertices such that $ D_{i,j}({\cal T}) = D_{i,j}$  for any 
$i,j \in \{1,...,n\}$.
If the tree is a positive-weighted (respectively  nonnegative-weighted) tree and $S \subset L(T)$,
we say that the family
 is {\bf p-l-treelike} (respectively nn-l-treelike).
A criterion for a metric on
a finite set to be p-treelike
was established in  \cite{B}, \cite{SimP}, \cite{St}: 

\begin{thm} \label{Bune} 
Let $\{D_{i,j}\}_{i,j \in \{1,...,n\}}$ be a set of positive real numbers. 
It is p-treelike if and only if, for all $i,j,k,h  \in \{1,...,n\}$,
the maximum of $$\{D_{i,j} + D_{k,h},D_{i,k} + D_{j,h},D_{i,h} + D_{k,j}\}$$ is attained at least twice. 
\end{thm}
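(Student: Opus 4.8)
The plan is to prove the two implications separately: necessity by a short case analysis, sufficiency by induction on $n$ together with an explicit attaching procedure.

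\textbf{Necessity.} Suppose ${\cal T}=(T,w)$ realizes $\{D_{i,j}\}$. If some two of $i,j,k,h$ coincide, a one-line computation shows that the displayed maximum fails to be attained twice precisely when one of the triangle inequalities $D_{a,b}\le D_{a,c}+D_{c,b}$ is violated; but in a positive-weighted tree the paths from $c$ to $a$ and from $c$ to $b$ share a common initial segment, so every such inequality holds. If $i,j,k,h$ are distinct, I would pass to the minimal subtree of $T$ spanning them. Up to relabelling this subtree is a star, or it has exactly two branch vertices $u,v$ joined by a path of weight $m\ge 0$ with two of the four points hanging from $u$ and two from $v$ (the remaining configurations are the cases $m=0$ or one point sitting at $u$ or $v$). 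Writing $ij|kh$ for the corresponding split, one computes $D_{i,j}+D_{k,h}=\sigma$ and $D_{i,k}+D_{j,h}=D_{i,h}+D_{j,k}=\sigma+2m$ for a suitable $\sigma\ge 0$, so the maximum of the three sums is attained at least twice. This direction is routine once the case list is written down.

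\textbf{Sufficiency.} Assume the four-point condition and induct on $n$; for $n\le 2$ realize $D_{1,2}$ by a single edge of that weight. For the inductive step, let ${\cal T}'=(T',w')$ realize the restricted family $\{D_{i,j}\}_{i,j\le n-1}$ (which inherits the four-point condition), and assume ${\cal T}'$ is \emph{reduced}, i.e.\ every leaf is one of $1,\dots,n-1$ and every other vertex has degree $\ge 3$ --- pruning unlabelled leaves and suppressing unlabelled degree-$2$ vertices changes no $2$-weight among $1,\dots,n-1$. The first key point is that the four-point condition for the quadruples $\{i,j,k,n\}$ is equivalent to the assertion that the numbers $g(i,j):=\tfrac12\bigl(D_{i,n}+D_{j,n}-D_{i,j}\bigr)$, $i,j\in\{1,\dots,n-1\}$, satisfy $g(i,j)\ge\min\{g(i,k),g(j,k)\}$ for all $i,j,k$ (and, from the quadruples with a repeated index, $0\le g(i,j)\le\min\{D_{i,n},D_{j,n}\}$); the proof is the Gromov-product identity $g(i,j)-g(i,k)=\tfrac12\bigl((D_{i,k}+D_{j,n})-(D_{i,j}+D_{k,n})\bigr)$. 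Set $\ell:=\min_{i\ne j}g(i,j)\ge 0$, pick $i^*,j^*$ with $g(i^*,j^*)=\ell$, and let $p$ be the point of the path from $i^*$ to $j^*$ in $T'$ at distance $D_{i^*,n}-\ell$ from $i^*$ (hence at distance $D_{j^*,n}-\ell$ from $j^*$, these summing to $D_{i^*,j^*}$; all three quantities are $\ge 0$ by the triangle inequalities). I would then form ${\cal T}$ by subdividing the edge through $p$ so that $p$ becomes a vertex and attaching a pendant edge $\{p,n\}$ of weight $\ell$; if $\ell=0$ one instead names $p$ itself $n$ (if $p$ is interior to an edge, by subdividing; if $p$ is already a vertex of $T'$, which by positivity must be unlabelled, by relabelling). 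In every case ${\cal T}$ is positive-weighted, contains $1,\dots,n$, and has unchanged $2$-weights among $1,\dots,n-1$.

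The remaining --- and only genuinely delicate --- step is to check $D_{i,n}({\cal T})=D_{i,n}$ for all $i\le n-1$, and this is exactly where the full four-point condition is used rather than just the triangle inequality. By construction $D_{i,n}({\cal T})=d(i,p)+\ell$, where $d$ denotes the $2$-weight metric of ${\cal T}'$; writing $m$ for the median of $i,i^*,j^*$ in $T'$ one has $d(i,p)=d(i,m)+\bigl|\,d(i^*,m)-d(i^*,p)\,\bigr|$, so the claim reduces to an identity that I would prove by applying the four-point condition to $\{i,i^*,j^*,n\}$: the minimality $g(i^*,i)\ge\ell$ and $g(j^*,i)\ge\ell$ forces one of the two ``crossing'' sums to equal $D_{i^*,j^*}+D_{i,n}$, and reading off the two subcases (according to whether $m$ lies on the $i^*$-side or the $j^*$-side of $p$) yields precisely $D_{i,n}=d(i,m)+|d(i^*,m)-d(i^*,p)|+\ell=d(i,p)+\ell$. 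The main obstacle is thus this verification together with the bookkeeping for the degenerate situations ($\ell=0$, or $p$ equal to a vertex of $T'$, or $i$ equal to $i^*$ or $j^*$); none of these is serious, but they are what make a full write-up longer than the idea.
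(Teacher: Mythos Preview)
The paper does not prove this theorem at all: Theorem~\ref{Bune} is stated as a classical result and attributed to Buneman, Sim\~oes-Pereira and Stotskii, with no argument given. The paper's own work is Theorem~\ref{thm:ThmAgne}, which \emph{assumes} the four-point condition and adds further hypotheses to force the realizing tree to have exactly $n$ vertices. So there is nothing in the paper to compare your argument against.

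That said, your sketch is a correct proof of the classical result, following the standard Gromov-product route. The necessity direction is the usual split computation on the quartet subtree. For sufficiency, your key observations are exactly the right ones: (a) the four-point condition on quadruples containing $n$ is equivalent to the statement that the Gromov products $g(i,j)=\tfrac12(D_{i,n}+D_{j,n}-D_{i,j})$ satisfy the ultrametric-type inequality $g(i,j)\ge\min\{g(i,k),g(j,k)\}$; (b) choosing $i^*,j^*$ minimizing $g$ pins down both the pendant length $\ell$ and the attaching point $p$; and (c) for arbitrary $i$, the identity $d(i^*,m)-d(i^*,p)=g(i,j^*)-g(i,i^*)$ together with $\min\{g(i,i^*),g(i,j^*)\}=\ell$ (forced by minimality of $\ell$ and the ultrametric inequality applied to $i,i^*,j^*$) gives $d(i,p)+\ell=D_{i,n}$ in both subcases. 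The degenerate case $\ell=0$ with $p$ landing on a labelled vertex $k$ is indeed excluded, though your parenthetical ``by positivity'' hides a one-line computation: one gets $g(i^*,k)=g(j^*,k)=D_{k,n}/2>0$ while $g(i^*,j^*)=0$, violating the ultrametric inequality for the triple $i^*,j^*,k$. With that remark made explicit, the argument is complete.
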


This condition is called \emph{four-point condition} and it is stronger than the triangle inequalities (just put $h=k$). Moreover, it is easy to prove that this condition is necessary and sufficient to ensure that a family of positive real numbers $\{D_{i,j}\}$ is nn-l-treelike.

Definition \ref{defin:weight} can be extended also to graphs: given two distinct vertices $i,j$ of a positive-weighted graph $\cal{G}=(G,w)$, $D_{i,j}(\cal{G})$ is defined to be the minimum of the weights of the paths joining $i$ with $j$; we define $D_{i,i}(\cal{G})=0$ as before. See for example \cite{H-Y} for some results on $2$-dissimilarity vectors of graphs. In \cite{H-P} and \cite{B-S} the definition of $2$-weights is given also for general-weighted graphs and trees respectively (graphs and trees in which the edges have real weights): in the first the authors give a characterization of the families of real numbers that are the $2$-dissimilarity families of a general-weighted graph, whereas the second deals with the case of general-weighted trees and their $2$-dissimilarity vectors. 

Finally we want to recall that, given a positive-weighted graph $\cal{G}=(G,w)$ with $\{1,...,n\} \subset V(G)$, there exists a definition of $k$-weights, where $k$ is a natural number less than $n$; when $k=2$ this definition corresponds exactly with the definition of $2$-weights (for both positive-weighted graphs and trees):

\begin{defin}
Let ${\cal G}=(G,w) $ be a positive-weighted graph. 
For any distinct $ i_1, .....,i_k \in V(G)$,
 we define $$ D_{\{i_1,...., i_k\}}({\cal G}) 
= min 
\{w(R) | \; R \text{ a connected subgraph of } G  \text{ such that } V(R) \ni 
i_1,...., i_k\}.$$ More simply, we can denote 
$D_{\{i_1,...., i_k\}}({\cal G})$ by
$D_{i_1,...., i_k}({\cal G})$ for any order of $i_1,..., i_k$. The  $ D_{i_1,...., i_k}({\cal G})$ are called
the {\bf $k$-weights} of ${\cal G}$.
\end{defin}
 
See \cite{P-S}, \cite{H-H-M-S}, \cite{B-R2}, \cite{Man}, \cite{Ru1} and \cite{Ru2} for some results on dissimilarity vectors of positive-weighted and general-weighted trees. In the end, in the recent paper \cite{B-R}, the authors have obtained some theorems about $(n-1)$-dissimilarity vectors of positive-weighted graphs and trees. \\

In this paper we characterize the families of positive real numbers $\{D_{i,j}\}_{i,j \in \{1,...,n\}}$ realized by a positive-weighted tree with exactly $n$ vertices, where $n \in \N, n \geq 3$ (see Theorem \ref{thm:ThmAgne}). 

\section{The main result}

In Theorem \ref{thm:ThmAgne} we will use the following notation:

\begin{nota} \label{nota}

$\bullet$  For any $n \in \N $ with $ n \geq 1$, let $[n]= \{1,..., n\}$.\\

$\bullet$  For any $i$ and $j$ vertices of a graph $G$, we denote with $e(i,j)$ the edge joining $i$ with $j$. \\

$\bullet$  Following \cite{B-M}, we will say that a graph is {\bf simple} if it has no loops or parallel edges; we define {\bf path} every simple graph whose vertices can be arranged in a linear sequence in such a way that two vertices are adjacent if and only if they are consecutive in the sequence.\\

$\bullet$  For simplicity, the vertices of trees will be often named with natural numbers. 
\end{nota}


\begin{thm}\label{thm:ThmAgne}
Let $n \in \N $ with $ n \geq 3$ and let $\{D_{i,j}\}_{i,j \in [n]}$ be a set of positive real numbers. There exists a positive-weighted tree ${\cal T}=(T,w)$ with exactly $n$ vertices, $1,...,n$, such that $D_{i,j}(T)=D_ {i,j}$ for all $i,j \in [n]$ if and only if the four-point condition holds and also:
\begin{itemize}
\item[$i.$] if for some $i,j,k,t \in [n]$ we have $D_{i,j}+D_{k,t}=D_{i,k}+D_{j,t}=D_{i,t}+D_{j,k}$, then there is an element $l$ in $[n]$ such that 
\begin{equation} \label{Bun1}
D_{u,v}= D_{u,l} + D_{v,l} \quad \textrm{for all distinct } u, v \in \{i,j,k,t\};
\end{equation}

\item[$ii.$] if for some $i,j,k,t \in [n]$ we have $D_{i,j}+D_{k,t}<D_{i,k}+D_{j,t}=D_{i,t}+D_{j,k}$, then, for any choice of three elements $u,v,w \in \{i,j,k,t\}$, there exists an element $l$ in $[n]$ such that 
\begin{equation} \label{Bun2}
D_{u,v}= D_{u,l} + D_{v,l}, \quad D_{u,w}= D_{u,l} + D_{w,l}, \quad D_{v,w}= D_{v,l} + D_{w,l}
\end{equation}
and
\begin{equation} \label{Bun3}
D_{u,v} + D_{w,l}= D_{u,w} + D_{v,l}= D_{u,l} + D_{v,w}.
\end{equation} 
\end{itemize}
Finally, if ${\cal T}$ exists, then it is unique.
\end{thm}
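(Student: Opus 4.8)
The plan is to prove the two directions of the equivalence separately, establishing necessity first and then sufficiency together with uniqueness.

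\textbf{Necessity.} Assume ${\cal T}=(T,w)$ is a positive-weighted tree with exactly $n$ vertices $1,\dots,n$ realizing $\{D_{i,j}\}$. The four-point condition is immediate from Theorem \ref{Bune}. For conditions $i.$ and $ii.$, the key observation is the standard fact about the three path-sums $D_{i,j}+D_{k,t}$, $D_{i,k}+D_{j,t}$, $D_{i,t}+D_{j,k}$: in a tree, the subtree spanned by $\{i,j,k,t\}$ is either a ``star'' with a single Steiner point (the degenerate case, where all three sums are equal) or has a central edge separating one pair from the other (where the two larger sums are equal and exceed the smallest). In the degenerate case, the central (Steiner) point $m$ of $\{i,j,k,t\}$ in $T$ must itself be one of the $n$ vertices — this is exactly where ``exactly $n$ vertices'' bites: $T$ has no room for an extra branch point not among $1,\dots,n$. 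Taking $l=m$ gives \eqref{Bun1}. In the non-degenerate case of $ii.$, for any three of the four points, say $u,v,w$, their Steiner point in $T$ lies on the spanning subtree and again must be a genuine vertex $l\in[n]$; the additivity relations \eqref{Bun2} and the equalities \eqref{Bun3} then just record that $l$ is the median of $u,v,w$ and that $D_{u,v}+D_{w,l}$ equals the total weight of the tripod on $\{u,v,w,l\}$. So necessity reduces to: every Steiner point of a size-$3$ or size-$4$ subset of $[n]$ in $T$ is one of the $n$ labelled vertices, which holds because $|V(T)|=n$ and every vertex of $T$ of degree $\ge 3$ (and in fact every vertex) lies among $1,\dots,n$.

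\textbf{Sufficiency.} Assume the four-point condition plus $i.$ and $ii.$. By Theorem \ref{Bune} there is a positive-weighted tree ${\cal T}_0=(T_0,w_0)$ with $[n]\subset V(T_0)$ realizing the family; among all such trees choose one with $|V(T_0)|$ minimal, equivalently (by standard arguments) one with no vertices of degree $\le 2$ outside $[n]$ and no ``redundant'' structure. The task is to show $|V(T_0)|=n$, i.e.\ that no Steiner vertex outside $[n]$ survives. Suppose $p\in V(T_0)\setminus[n]$; then $p$ has degree $\ge 3$, so there are three of our points $i,j,k\in[n]$ whose pairwise paths all pass through $p$, forcing $D_{i,j}+D_{i,k}+D_{j,k}$ to have $p$ as common median. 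Extend to a fourth point $t$ and examine which case of the trichotomy among the three path-sums on $\{i,j,k,t\}$ occurs in $T_0$; applying hypothesis $i.$ or $ii.$ produces a labelled vertex $l\in[n]$ whose additivity relations, when transported back into $T_0$, force $l$ to coincide with $p$ (uniqueness of medians in a tree). Hence $p\in[n]$, a contradiction, so $|V(T_0)|=n$. One then sets ${\cal T}={\cal T}_0$.

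\textbf{Uniqueness.} Two positive-weighted trees with vertex set exactly $[n]$ realizing the same $2$-weights must be isomorphic as weighted trees via the identity on labels: the tree topology is recovered from the family by the usual reconstruction (adjacency of $i,j$ in such a tree is detected by $D_{i,j}=D_{i,k}+D_{k,j}$ failing for every $k\ne i,j$, or by an equivalent four-point split criterion), and once the edge set is pinned down the weights are determined by solving the resulting linear system along paths — positivity and the exact vertex count remove the slack that normally allows subdivision. I would phrase this as: the map ${\cal T}\mapsto\{D_{i,j}({\cal T})\}$ is injective on positive-weighted trees with vertex set exactly $[n]$.

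\textbf{Main obstacle.} The delicate point is the sufficiency argument: turning the abstract existence of the element $l$ in hypotheses $i.$ and $ii.$ into a proof that a specific Steiner point $p$ of the minimal realizing tree $T_0$ actually carries a label. This requires carefully matching the combinatorial configuration around $p$ in $T_0$ (which of the three path-sum cases holds, which triple $i,j,k$ to pick, how to choose the fourth point $t$ so that the relevant case of $ii.$ applies) with the additivity identities \eqref{Bun1}--\eqref{Bun3}, and then invoking uniqueness of medians/Steiner points in trees to conclude $l=p$. Handling the non-degenerate case $ii.$ — where one must run the argument for all three choices of the triple $\{u,v,w\}$ to pin down enough Steiner points — is where the bookkeeping is heaviest.
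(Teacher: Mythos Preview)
Your necessity argument matches the paper's. Your sufficiency argument, however, takes a genuinely different route. The paper proceeds by \emph{induction on $n$}: after handling $n=3,4$ directly, it locates a vertex $a$ that must be a leaf (by maximizing $D_{a,c}+D_{b,c}-D_{a,b}$ over triples and checking via the four-point condition that the Gromov product $\alpha=\tfrac12(D_{a,c}+D_{a,b}-D_{b,c})$ is constant in the third coordinate), uses conditions $i.$/$ii.$ on a quadruple $\{a,b,r,s\}$ to identify the neighbour $l\in[n]$ with $D_{a,l}=\alpha$, applies the inductive hypothesis to $[n]\setminus\{a\}$, and then re-attaches $a$; uniqueness is proved inside the same induction. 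You instead invoke Theorem~\ref{Bune} to get a realizing tree $T_0$, pass to one with $|V(T_0)|$ minimal, and show that any surviving Steiner vertex $p\notin[n]$ would, via $i.$ or $ii.$ applied to a suitable quadruple whose median in $T_0$ is $p$, be forced to equal some $l\in[n]$. Both arguments are valid; yours is shorter and more conceptual (it isolates exactly the role of $i.$ and $ii.$ as ``every Steiner point is labelled''), while the paper's is self-contained and constructive, not leaning on the Buneman realization as a black box. Your uniqueness argument (adjacency of $i,j$ detected by the nonexistence of $k$ with $D_{i,j}=D_{i,k}+D_{k,j}$) is also different from, and cleaner than, the paper's inductive one. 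The one place your outline should be tightened is the case split in sufficiency: you should say explicitly that with $i,j,k$ in three distinct components of $T_0\setminus p$ and any fourth $t\in[n]$, either $t$ lies in a fourth component (forcing case $i.$ with centre $p$) or $t$ shares a component with, say, $k$ (forcing case $ii.$ with strict inequality, and then the triple $\{i,j,k\}$ has median $p$); and for $n=3$ you are implicitly using condition $ii.$ with a repeated index, as the paper itself does.
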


\begin{proof} 
$\Rightarrow$
It is obvious that the the four-point condition is still necessary. Let $\cal{T}=(T,w)$ be a positive-weighted tree with $V(T)=[n]$, and let $i,j,k,t$ be four elements in $V(T)$ such that  
$D_{i,j}(\cal{T})+D_{k,t}(\cal{T})=D_{i,k}(\cal{T})+D_{j,t}(\cal{T})=D_{i,t}(\cal{T})+D_{j,k}(\cal{T})$: 
in this case the minimal subtree of $\cal{T}$ containing the four vertices must be a star, and, if we call $l$ the vertex in the centre of the star, then (\ref{Bun1}) holds. \\ 
If there are $i,j,k,t \in V(T)$ such that 
$D_{i,j}(\cal{T})+D_{k,t}(\cal{T})<D_{i,k}(\cal{T})+D_{j,t}(\cal{T})=D_{i,t}(\cal{T})+D_{j,k}(\cal{T})$, 
then the minimal subtree containing the four vertices must be as in Figure \ref{fig} (if $i,j,k,t$ are distinct); note that $i$ or $j$ could coincide with $l$, and $k$ or $t$ could coincide with $s$. 

\begin{figure}[h!]
\begin{center}

\begin{tikzpicture}
\draw [ultra thick] (0,0) --(1,1) ; 
\draw [ultra thick] (1,1) --(0,2) ;
\draw [ultra thick] (1,1) --(3,1) ;  
\draw [ultra thick] (3,1) --(4,2) ;
\draw [ultra thick] (3,1) --(4,0) ;  

\node [above] at (0,2) {$\mathbf{i}$};
\node [below] at (0,0) {$\mathbf{j}$};
\node [below] at (4,0) {$\mathbf{t}$};
\node [above] at (4,2) {$\mathbf{k}$};
\node [left] at (0.9,1) {$\mathbf{l}$};
\node [right] at (3.1,1) {$\mathbf{s}$};
\node [right] at (0.5,1.6) {$\mathbf{\alpha}$};
\node [right] at (0.5,0.4) {$\mathbf{\beta}$};
\node [left] at (3.5,0.4) {$\mathbf{\delta}$};
\node [left] at (3.5,1.6) {$\mathbf{\gamma}$};
\node [above] at (2,1) {$\mathbf{\epsilon}$};

\end{tikzpicture}

\caption{Minimal subtree if $D_{i,j}(\cal{T})+D_{k,t}(\cal{T})<D_{i,k}(\cal{T})+D_{j,t}(\cal{T})=D_{i,t}(\cal{T})+D_{j,k}(\cal{T}).$ \label{fig}}
\end{center}
\end{figure}

In this case, if we choose for example $u=i,v=j$ and $w=k$ in $\{i,j,k,t\}$, we can consider the vertex $l$ as in figure and then the equalities (\ref{Bun2}) and (\ref{Bun3}) hold. We argue analogously if $i,j,k$ and $t$ are not distinct. \\

$\Leftarrow$
We proceed by induction on $n$. If $n=3$, then, up to interchanging $1,2$ and $3$, we can suppose that $D_{1,2}=D_{1,3}+D_{2,3}$. In fact, if $D_{1,2}<D_{1,3}+D_{2,3}$, then by condition $(ii)$ there exists $l \in [3]$ such that (\ref{Bun2}) holds. Moreover, $l$ can't be equal to $3$, otherwise we would have $D_{1,2}=D_{1,3}+D_{2,3}$; if $l=1$, then $D_{2,3}=D_{1,2}+D_{1,3}$ and we get an equality, if $l=2$, we get again an equality because $D_{1,3}=D_{1,2}+D_{2,3}$. So up to interchanging $1,2$ and $3$, we can suppose that $D_{1,2}=D_{1,3}+D_{2,3}$. Let $\cal{T}$ be the positive-weighted path with three vertices in Figure \ref{fig2}: obviously $D_{i,j}(\cal{T})=D_{i,j}$ for all $i,j \in [3]$ and $\cal{T}$ is unique.

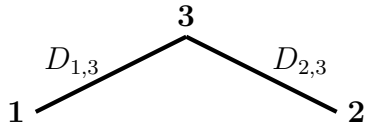
\begin{figure}[h!]
\begin{center}

\begin{tikzpicture}
\draw [ultra thick] (3,1) --(5,0) ;
\draw [ultra thick] (1,0) --(3,1) ;  

\node [right] at (5,0) {$\mathbf{2}$};
\node [above] at (3,1) {$\mathbf{3}$};
\node [left] at (1,0) {$\mathbf{1}$};
\node [right] at (4,0.65) {$D_{2,3}$};
\node [left] at (2,0.65) {$D_{1,3}$};

\end{tikzpicture}

\caption{Positive-weighted tree when $n=3$ and $D_{1,2}=D_{1,3}+D_{2,3}$. \label{fig2}}
\end{center}
\end{figure}

If $n=4$, we consider two cases: 
\begin{itemize}
\item[-] if $D_{1,2}+D_{3,4}=D_{1,3}+D_{2,4}=D_{1,4}+D_{2,3}$, then by $(i)$ there exists $l$ in $[4]$ for which (\ref{Bun1}) is true. Let $\cal{T}$ be the positive-weighted star with center $l$, set of leaves $[4]-\{l\}$ and $w(e(i,l))=D_{i,l}$ for any $i \in [4]-\{l\}$ (see Figure \ref{fig3}.$(a)$). We can easily prove that $D_{i,j}(\cal{T})=D_{i,j}$ for any $i,j \in [4]$. Note that $l$ is unique: otherwise, if for example (\ref{Bun1}) held both with l=1 and with l=2, 
we would have: $D_{2,3}=D_{1,2}+D_{1,3}$ and $D_{1,3}=D_{1,2}+D_{2,3}$, thus $2D_{1,2}=0$, which is absurd. This means that also $\cal{T}$ is unique.

\item[-] if one of the equalities above is an inequality, 
for example $D_{1,2}+D_{3,4} < D_{1,3}+D_{2,4}=D_{1,4}+D_{2,3}$, 
then by condition $(ii)$, chosen the triplet $\{1,2,3\}$ as $\{u,v,w\}$,
there exists $l \in [4] $ such that equalities (\ref{Bun2}) hold. 
Note that $l$ must be different from $4$, otherwise we would have: 
$D_{1,2}=D_{1,4}+D_{2,4}$, $D_{2,3}=D_{2,4}+D_{3,4}$ 
and then $D_{1,2}+D_{3,4} =D_{1,4}+D_{2,3}$, which is impossible. 
Moreover $l$ is different from $3$, because, if $D_{1,2}=D_{1,3}+D_{2,3}$,
then $D_{1,3}+D_{2,3}+D_{3,4} < D_{1,3}+D_{2,4}$, that is $D_{2,3}+D_{3,4} < D_{2,4}$, 
which is absurd by the four-point condition. \\
If $l=2$, let ${\cal T}$ be the  positive-weighted path in Figure \ref{fig3}.$(b)$ if, 
for the triplet $\{1,3,4\}$, $l$ is equal to $3$; 
let ${\cal T}$ be the  positive-weighted path in Figure \ref{fig3}.$(c)$ if,
for the triplet $\{1,3,4\}$, $l$ is equal to $4$. 
It is easy to check that, in both cases, ${\cal T}$ is the unique positive-weighted tree 
with $[4]$ as the set of vertices such that $D_{i,j} ({\cal T})=D_{i,j}$ for all $i,j \in [4]$. \\
Finally, if for the triplet $\{1,2,3\}$ $l$ is equal to $1$, 
let ${\cal T}$ be the  positive-weighted path in Figure \ref{fig3}.$(d)$ if,
for the triplet $\{1,3,4\}$, $l$ is equal to $3$; 
let ${\cal T}$ be the  positive-weighted path in Figure \ref{fig3}.$(e)$ if, 
for the triplet $\{1,3,4\}$, $l$ is equal to $4$. 
It is easy to check that, in both cases, ${\cal T}$ is the unique positive-weighted tree 
with $[4]$ as the set of vertices such that $D_{i,j} ({\cal T})=D_{i,j}$ for all $i,j \in [4]$.

\end{itemize}

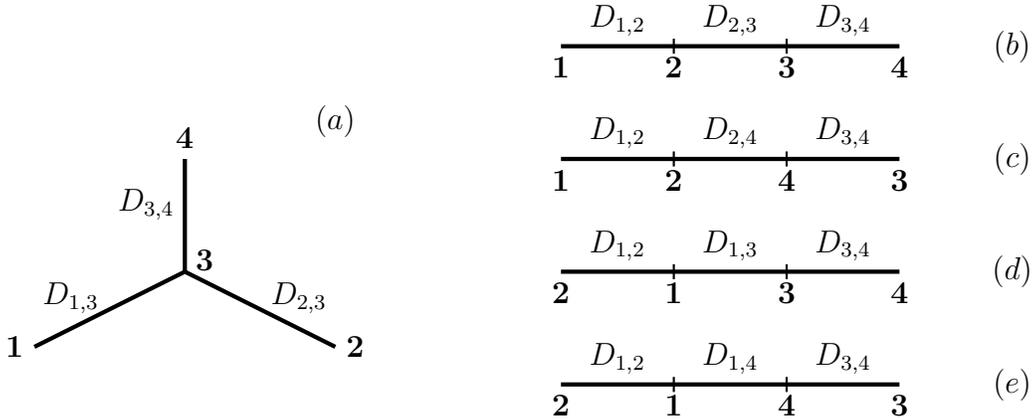
\begin{figure}[h!]
\begin{center}

\begin{tikzpicture}
\draw [ultra thick] (3,1) --(5,0) ;
\draw [ultra thick] (1,0) --(3,1) ;  
\draw [ultra thick] (3,1) --(3,2.5) ;

\node [right] at (5,0) {$\mathbf{2}$};
\node [right] at (3,1.15) {$\mathbf{3}$};
\node [left] at (1,0) {$\mathbf{1}$};
\node [above] at (3,2.5) {$\mathbf{4}$};

\node [right] at (4,0.65) {$D_{2,3}$};
\node [left] at (2,0.65) {$D_{1,3}$};
\node [left] at (3,1.9) {$D_{3,4}$};

\draw [ultra thick] (8,4) --(12.5,4) ;
\draw [ultra thick] (8,2.5) --(12.5,2.5) ;  
\draw [ultra thick] (8,1) --(12.5,1) ;
\draw [ultra thick] (8,-0.5) --(12.5,-0.5) ;

\draw [thick] (9.5,3.9) --(9.5,4.1) ;
\draw [thick] (11,3.9) --(11,4.1) ;  
\draw [thick] (9.5,2.4) --(9.5,2.6) ;
\draw [thick] (11,2.4) --(11,2.6) ;  
\draw [thick] (9.5,0.9) --(9.5,1.1) ;
\draw [thick] (11,0.9) --(11,1.1) ;  
\draw [thick] (9.5,-0.4) --(9.5,-0.6) ;
\draw [thick] (11,-0.4) --(11,-0.6) ;

\node [below] at (8,4) {$\mathbf{1}$};
\node [below] at (9.5,4) {$\mathbf{2}$};
\node [below] at (11,4) {$\mathbf{3}$};
\node [below] at (12.5,4) {$\mathbf{4}$};

\node [below] at (8,2.5) {$\mathbf{1}$};
\node [below] at (9.5,2.5) {$\mathbf{2}$};
\node [below] at (11,2.5) {$\mathbf{4}$};
\node [below] at (12.5,2.5) {$\mathbf{3}$};

\node [below] at (8,1) {$\mathbf{2}$};
\node [below] at (9.5,1) {$\mathbf{1}$};
\node [below] at (11,1) {$\mathbf{3}$};
\node [below] at (12.5,1) {$\mathbf{4}$};

\node [below] at (8,-0.5) {$\mathbf{2}$};
\node [below] at (9.5,-0.5) {$\mathbf{1}$};
\node [below] at (11,-0.5) {$\mathbf{4}$};
\node [below] at (12.5,-0.5) {$\mathbf{3}$};

\node [above] at (8.75,4) {$D_{1,2}$};
\node [above] at (8.75,2.5) {$D_{1,2}$};
\node [above] at (8.75,1) {$D_{1,2}$};
\node [above] at (8.75,-0.5) {$D_{1,2}$};
\node [above] at (10.25,4) {$D_{2,3}$};
\node [above] at (10.25,2.5) {$D_{2,4}$};
\node [above] at (10.25,1) {$D_{1,3}$};
\node [above] at (10.25,-0.5) {$D_{1,4}$};
\node [above] at (11.75,4) {$D_{3,4}$};
\node [above] at (11.75,2.5) {$D_{3,4}$};
\node [above] at (11.75,1) {$D_{3,4}$};
\node [above] at (11.75,-0.5) {$D_{3,4}$};

\node at (5,3) {$(a)$};
\node at (14,4) {$(b)$};
\node at (14,2.5) {$(c)$};
\node at (14,1) {$(d)$};
\node at (14,-0.5) {$(e)$};

\end{tikzpicture}

\caption{$(a)$ $D_{1,2}+D_{3,4}=D_{1,3}+D_{2,4}=D_{1,4}+D_{2,3}$ and $l=3.$ \newline $(b)$, $(c)$, $(d)$  $D_{1,2}+D_{3,4}<D_{1,3}+D_{2,4}=D_{1,4}+D_{2,3}$.\label{fig3}}
\end{center}
\end{figure}

When $n$ is generic, we consider $a, b, c \in [n]$
 such that the value $$D_{a,c} + D_{b,c} - D_{a,b}$$ is maximum 
and the number 
$$ \alpha := \frac{1}{2} (D_{a,c} + D_{a,b} - D_{b,c})$$ 
is positive (note that, up to swapping $a$ with $b$, we can suppose that $\alpha$ is positive; otherwise we would have: $D_{a,c}+D_{a,b}-D_{b,c}=0$ and $D_{b,c}+D_{a,b}-D_{a,c}=0$, thus $D_{a,c}+D_{a,b}-D_{b,c}+D_{b,c}+D_{a,b}-D_{a,c}=0$, and so $2D_{a,b}=0$, which is absurd).

For all $r$ in $[n]-\{a,b\}$ we have:

$$D_{r,c} + D_{b,c} - D_{r,b} \leq D_{a,c} + D_{b,c} - D_{a,b},$$

hence

$$ D_{r,c} + D_{a,b} \leq D_{a,c} + D_{r,b}.$$

Analogously:

$$D_{a,c} + D_{r,c} - D_{a,r} \leq D_{a,c} + D_{b,c} - D_{a,b},$$

hence 

$$D_{r,c} + D_{a,b} \leq D_{b,c} + D_{a,r}.$$

So, by the four-point condition,
\begin{equation} \label{Bun6}
 D_{a,c} + D_{r,b} = D_{b,c} + D_{a,r}.
\end{equation}

Moreover, for all $r \in [n]-\{a,b\}$ we have:

\begin{equation} \label{Bun5}
2 \alpha=D_{a,c} + D_{a,b} - D_{b,c} =  D_{a,r} + D_{a,b} - D_{r,b},
\end{equation}

where the second equality holds by (\ref{Bun6}).

If we choose $s \in [n]-\{a,b\}$, then
\begin{equation} \label{Bun7}
D_{a,c} + D_{s,b} = D_{b,c} + D_{a,s}
\end{equation}

as before, and combining (\ref{Bun6}) with (\ref{Bun7}) we obtain:

\begin{equation} \label{Bun4}
D_{s,b} + D_{a,r} = D_{r,b} + D_{a,s} \geq D_{a,b} + D_{r,s},
\end{equation}
where the last inequality follows from the four-point condition.

Now, fix $r,s \in [n]-\{a,b\}$; we consider the quadruplet $\{a,b,r,s\}$; there are two possibilities:

\begin{enumerate}
\item Suppose that $D_{a,b}+D_{r,s}=D_{a,r}+D_{b,s}=D_{a,s}+D_{b,r}.$ Then, by $(i)$,
there exists $l \in [n]-\{a\}$ such that (\ref{Bun1}) holds, 
and, using (\ref{Bun5}), such that $\alpha = D_{a,l}$. 
Observe that $l$ is different from $a$, because, otherwise, we would have:
$$D_{r,b}= D_{a,r} + D_{a,b},$$
hence
$$ 0=D_{a,r} + D_{a,b} - D_{r,b}= 2 \alpha$$
(where the last equality holds by (\ref{Bun5})), and this is not possible. 
Let us consider the set $[n]-\{a\};$ 
by inductive hypothesis there exists a unique positive-weighted tree 
$\cal{T'}=(T',w')$, with $V(T')=[n]-\{a\}$, such that 
$D_{x,y}(\cal{T'})=D_{x,y}$ for any $x,y \in [n]-\{a\}$. 
To realize $\cal{T}$ we attach an edge with weight $\alpha$ to the vertex $l$, 
and we call the second vertex $a$. We obtain a weighted tree $\cal{T}$ with exactly $n$ vertices and we have:
\begin{itemize}
\item [-] $D_{a,l}(\cal{T})=\alpha = D_{a,l}$;
\item [-] $D_{a,b}(\cal{T})=\alpha + D_{l,b}(\cal{T'}) =D_{a,l} + D_{l,b} = D_{a,b}$;
\item [-] $D_{x,y}(\cal{T})=D_{x,y}(\cal{T}')=D_{x,y } \quad$  for any $x , y \neq a$;
\item [-] $D_{a,x}(\cal{T})=\alpha + D_{l,x} (\cal{T'})=D_{a,l} + D_{l,x}\quad$ for any $x \in [n]-\{a,b,l\}.$
\end{itemize}

So we have to demonstrate that $D_{a,l} + D_{l,x}=D_{a,x}$ for all $x \in [n]-\{a,b,l\}$; we already know that 
$D_{a,l} + D_{l,x} \geq D_{a,x}$, so it is enough to prove $D_{a,l} + D_{l,x} \leq D_{a,x}.$

If $l$ is different from $b$, then, by (\ref{Bun4}), with $l$ and $x$ instead of $r$ and $s$, we have:
$$D_{a,b} + D_{l,x} \leq D_{b,x} + D_{a,l} = D_{l,b} + D_{a,x},$$
hence
$$D_{a,x} \geq D_{a,b} + D_{l,x} - D_{l,b} = D_{a,l} + D_{l,x},$$
where the last equality follows from (\ref{Bun1}).

If $l$ is equal to $b$, then we use again (\ref{Bun4}), with $x$ instead of $s$, and we obtain:
$$D_{a,x}=D_{a,r} + D_{b,x} - D_{r,b}= D_{a,b} + D_{r,b}+D_{b,x} - D_{r,b}=D_{a,b} + D_{b,x}.$$

To prove that $\cal{T}$ is unique, 
suppose that there exists a positive-weighted tree, $\cal{A}=(A,w_A)$, 
different from $\cal{T}$, such that $V(A)=[n]$ and $D_{x,y}(\cal{A})=D_{x,y}$ for all $x,y \in [n].$
First note that $a$ must be a leaf in both trees, for example let us prove that $a$ must be a leaf in $\cal{T}:$
otherwise there would exist elements $x$ and $y$ in $[n]-\{a,b\}$ with 
$D_{x,y}(\cal{T})=D_{x,a}(\cal{T})+D_{a,y}(\cal{T})$, that is 
$D_{x,y}=D_{x,a}+D_{a,y}$. By (\ref{Bun4}), with $x$ and $y$ instead of $r$ and $s$, we know that:

$$D_{x,b} \geq  D_{x,y} + D_{a,b} - D_{y,a} =D_{x,a}+D_{a,b},$$

that implies by (\ref{Bun5}):

$$2\alpha=D_{x,a}+D_{a,b}-D_{x,b}\leq 0, $$

which is impossible. Analogously we can prove that $a$ is a leaf in $\cal{A}$.\\
Let us consider the positive-weighted tree $\cal{A'}=(A',w'_A)$ 
obtained from $\cal{A}$ by deleting the leaf $a$ with its twig: 
$V(A')=[n]-\{a\}$ and $D_{x,y}(\cal{A})=D_{x,y}$ for all $x,y \in [n]- \{a\},$ 
so by inductive hypothesis $\cal{A'}$ must be equal to $\cal{T'}$.

To end the proof we observe that the unique way to reconstruct the tree $\cal{A}$ 
from $\cal{A'}$ (and analogously the unique way to reconstruct the tree $\cal{T}$ 
from $\cal{T'}$) is attaching an edge with weight $\alpha$ and with leaf $a$ 
to the unique vertex $l$ adjacent to $a$ in $\cal{A}$. For this vertex we have that:

$$D_{a,x}(\cal{A})=D_{a,l}(\cal{A})+D_{l,x}(\cal{A})$$ 

for any $x \in [n]-\{a,l\}$, hence:

\begin{equation} \label{BunFin}
D_{a,x}=D_{a,l}+D_{l,x}.
\end{equation}

Observe that, if there existed $l$ and $\tilde{l}$ in $[n]$ such that (\ref{BunFin}) holds, then we would have:
$$D_{a,\tilde{l}}=D_{a,l}+D_{l,\tilde{l}}\,\, \textrm{ and } \,\, D_{a,l}=D_{a,\tilde{l}}+D_{l,\tilde{l}}$$
and so $2D_{l,\tilde{l}}=0,$ which implies $l=\tilde{l}$. Hence $\cal{T}$ and $\cal{A}$ are obtained from the same tree by attaching an edge to the same vertex, so they are equal.

\item Suppose that $D_{a,b}+D_{r,s}<D_{a,r}+D_{b,s}=D_{a,s}+D_{b,r}.$ 
Then by (\ref{Bun2}), choosen the triplet $\{a,b,r\}$ as $\{u,v,w\}$, there exists $\tilde{l} \in [n]$ such that:

\begin{equation} \label{lastBun}
D_{a,b}+D_{r,\tilde{l}}=D_{a,r}+D_{b,\tilde{l}}=D_{a,\tilde{l}}+D_{b,r},
\end{equation}

$$D_{a,b}=D_{a,\tilde{l}}+D_{b,\tilde{l}}, \quad D_{a,r}=D_{a,\tilde{l}}+D_{r,\tilde{l}}, \quad D_{b,r}=D_{b,\tilde{l}}+D_{r,\tilde{l}}.$$
We have $\tilde{l}\neq a$, so, if it is different from $b$, considering the quadruplet $\{a, b ,l ,r \}$, we can return to the first case. \\
If $\tilde{l}=b,$ we consider the unique positive-weighted tree $\cal{T'}=(T',w')$ such that $V(T')=[n]-\{a\}$ and 
$D_{x,y}(\cal{T'})=D_{x,y}$ for any $x,y \in [n]-\{a\}$. 
Then we construct the tree $\cal{T}$ by attaching an edge with weight $D_{a,b}$ to the vertex $b$, 
and by calling its second vertex $a$. In this tree we have:
\begin{itemize}
\item $D_{a,b}(\cal{T})=D_{a,b};$
\item $D_{x,y}(\cal{T})=D_{x,y}(\cal{T'})=D_{x,y} \quad$ for all $x,y \in [n]-\{a\};$ 
\item $D_{a,x}(\cal{T})=D_{a,b}(\cal{T})+D_{x,b}(\cal{T'})=D_{a,b}+D_{x,b} \quad$ for all $x \in [n]-\{a,b\}.$ 
\end{itemize}
So we have to demonstrate that $D_{a,x}=D_{a,b}+D_{b,x}$ for all $x \in [n]-\{a,b\}$. By (\ref{Bun4}), with $x$ instead of $s$, we know that: $$D_{a,x}=D_{b,x}+D_{r,a}-D_{r,b},$$
and by (\ref{lastBun}) we know that: 
$$D_{a,r}=D_{a,b}+D_{r,b},$$ 
so we can conclude.\\
The uniqueness of $\cal{T}$ can be proved as in the previous case. 
\end{enumerate}
\end{proof}

{\small }

\bigskip

{\bf Address of the author:}
Dipartimento di Matematica e Informatica ``U. Dini'', 
viale Morgagni 67/A,
50134  Firenze, Italia 

{\bf
E-mail addresse:}
baldisser@math.unifi.it


\begin{thebibliography}{Dilloo Dilloo 83}

\bibitem{B-R} A. Baldisserri, E. Rubei, {\sl On graphlike $k$-dissimilarity vectors},
to appear in Ann. Comb.

\bibitem{B-R2} A. Baldisserri, E. Rubei, {\sl Treelike families of multiweights},
submitted.

\bibitem{B-M}  J. A. Bondy, U. S. R. Murty,
 Graph Theory. Springer, 2008

\bibitem{B-S} H-J Bandelt, M.A. Steel, {\em Symmetric matrices 
representable by weighted trees over a cancellative abelian monoid}.
SIAM J. Discrete Math. 8 (1995), no. 4, 517--525


\bibitem{B} P. Buneman, {\em A note on the metric properties
 of trees}, Journal of  Combinatorial Theory Ser.  B  17 (1974), 48-50

\bibitem{Dresslibro}  A. Dress, K. T. Huber, J. Koolen, V. Moulton, A. Spillner,
 Basic phylogenetic combinatorics. Cambridge University Press, Cambridge, 2012


\bibitem{H-H-M-S} 
S.Herrmann, K.Huber, V.Moulton, A.Spillner, 
{\em Recognizing treelike k-dissimilarities},
J. Classification 29  (2012),   no. 3, 321-340

\bibitem{H-P} 
S.L. Hakimi, A.N. Patrinos, {\em 
 The distance matrix of a graph and its tree realization}, 
Quart. Appl. Math. 30 (1972/73), 255-269

\bibitem{H-Y}
S.L. Hakimi, S.S. Yau,
{\em Distance matrix of a graph and its realizability},
Quart. Appl. Math. 22 (1965), 305-317


\bibitem{Man} C. Manon, {\em Dissimilarity maps on trees and the representation theory of $SL_m( \C)$}, J. Algebraic Combin. 33 (2011), no. 2, 199-213

\bibitem{P-S}  L. Pachter, D. Speyer, {\em Reconstructing
 trees from subtree weights}, Appl. Math. Lett. 17  (2004), no. 6, 615--621

\bibitem{Ru1}
E. Rubei, {\em Sets of double and
 triple weights of trees}, Ann. Comb. 15  (2011), no. 4, 723-734  

\bibitem{Ru2} E. Rubei, {\em On dissimilarity vectors of general 
weighted trees}, Discrete Math. 312  (2012), no. 19,  2872-2880 

\bibitem{SimP} J.M.S. Simoes Pereira, {\em A Note on the Tree 
Realizability of a distance matrix}, J. Combinatorial Theory 6 (1969),
 303-310


\bibitem{St} 
E. D. Stotskii, {\em
 Embedding of Finite Metrics in graphs}, Siberian Math. J. 5 (1964), 1203-1206

\end{thebibliography}
\end{document}